\def\h{ {\cal H} }
\def\l{ {\cal L} }
\def\a{ {\cal A} }
\def\q{ {\cal Q} }
\def\b{ {\cal B} }
\def\u{ {\cal U} }
\def\ii{ {\cal I} }
\def\s{ {\cal S} }
\def\p{ {\cal P} }
\def\k{ {\cal K} }
\def\c{ {\cal C} }
\def\d{ {\cal D} }
\def\xx{ {\bf x} }
\def\dd{ {\bf d} }
\def\kk{ {\bf k} }
\newtheorem{teo}{Theorem}[section]
\newtheorem{prop}[teo]{Proposition}
\newtheorem{lem}[teo]{Lemma}
\theoremstyle{definition}
\newtheorem{rem}[teo]{Remark}
\title{A note on geodesics of projections in the Calkin algebra}
\author{Esteban Andruchow\footnote{{\sc  {Instituto de Ciencias,  Universidad Nacional de Gral. Sar\-miento,
J.M. Gutierrez 1150,  (1613) Los Polvorines, Argentina and Instituto Argentino de Matem\'atica, `Alberto P. Calder\'on', CONICET, Saavedra 15 3er. piso,
(1083) Buenos Aires, Argentina.}} e-mail: eandruch@ungs.edu.ar}}
\begin{document}

\maketitle 

\begin{abstract}
Let $\c(\h)=\b(\h)/\k(\h)$ be the Calkin algebra ($\b(\h)$ the algebra of bounded operators on the Hilbert space $\h$, $\k(\h)$ the ideal of compact operators, and $\pi:\b(\h)\to \c(\h)$ the quotient map), and $\p_{\c(\h)}$ the differentiable manifold of selfadjoint projections in $\c(\h)$. A projection $p$ in $\c(\h)$ can be lifted to a projection $P\in\b(\h)$: $\pi(P)=p$. We show that given $p,q\in\p_{\c(\h)}$, there exists a {\it minimal geodesic} of $\p_{\c(\h)}$ which joins $p$ and $q$ if and only if   there exist lifting  projections $P$ and $Q$ such that  either both $N(P-Q\pm 1)$ are finite dimensional, or  both infinite dimensional. The minimal geodesic is {\it unique} if  $p+q- 1$ has trivial anhihilator. Here the assertion that a geodesic is {\it minimal} means that it is  shorter than any other piecewise smooth curve $\gamma(t)\in\p_{\c(\h)}$, $t\in I$, joining the same endpoints, where the length of $\gamma$ is measured by $\displaystyle{\int_I \|\dot{\gamma}(t)\| d t}$.
\end{abstract}

\bigskip

{\bf 2010 MSC:}  58B20, 46L05, 53C22

{\bf Keywords:}  Projections, Calkin algebra, geodesics of projections.

\section{Introduction}
If $\a$ is a C$^*$-algebra. let $\p_\a$ denote the set of (selfadjoint) projections in $\a$. $\p_\a$ has a rich geometric structure, see for instante the papers \cite{pr} by H. Porta and L.Recht and \cite{cpr} by G. Corach, H. Porta and L. Recht. In these works, it was shown that $\p_\a$ is a differentiable (C$^\infty$) complemented submanifold of $\a_{s}$, the set of selfadjoint elements of $\a$, and has a natural linear connection, whose geodesics can be explicitly computed. A  metric is introduced, called in this context a Finsler metric: since the tangent spaces of $\p_\a$ are closed (complemented) linear subspaces of $\a_s$, they can be endowed with the norm metric. With this Finsler metric, Porta and Recht \cite{pr} showed that two projections $p,q\in\p_\a$ which satisfy that $\|p-q\|<1$ can be joined by a unique geodesic, which is minimal for the metric (i.e., it is shorter than any other smooth curve in $\p_\a$ joining the same endpoints).

In general, two projections $p,q$ in $\a$ satisfy that $\|p-q\|\le 1$, so that what  remains to consider is what happens in the extremal case $\|p-q\|=1$: under what conditions does there exist a geodesic, or a minimal geodesic, joining them. For general C$^*$-algebras, this is too vast a question. In this note we shall consider it for the case of the Calkin algebra $\a=\c(\h)=\b(\h) / \k(\h)$, where  $\b(\h)$ is the algebra of bounded linear operators in a Hilbert space  $\h$ and $\k(\h)$ is the ideal of compact operators. Denote by $\pi:\b(\h)\to \c(\h)$ the quotient $*$-homomorphism.

Let $c\in\c(\h)$ be a selfadjoint element. We say that $c$ has {\it trivial anhihilator} if $cx=0$ for $x\in\c(\h)$ implies $x=0$. Note that since $c^*=c$, this is equivalent to $xc=0$ implies $x=0$.

Clearly, if $p-q\pm 1$ have trivial anhilators, then for any lifting projecions $P$ and $Q$ (of $p$ and $q$, respectively), $\dim N(P-Q\pm 1)<\infty$: if $\dim N(P-Q\pm 1)=+\infty$, then $(P-Q-1)P_{N(P-Q-1)}=0$ so that $(p-q-1)\pi(P_{N(P-Q-1)})=0$ with $\pi(P_{N(P-Q-1)})\ne 0$ (and similarly for $P_Q+1$) . Also, these conditions are weaker than $p-q\pm 1$ being invertible.

Given $p,q\in\p_{\c(\h)}$, throughout this note we denote   $b=p+q$.  We find a  necessary and sufficient condition for the existence  of a geodesic joining $p$ and $q$ in $\p_{\c(\h)}$ ({\bf Theorem \ref{el teorema}}):
 there exists a geodesic joining $p$ and $q$ if and only if  there exist lifting projections $P$ and $Q$ of $p$ and $q$, such that $N(P-Q-1)$ and $N(P-Q+1$ are both finite dimensonal or both infinite dimensional.

Moreover, in the case of existence, also {\it minimal } a minimal geodesic exists.

With respect to {\it uniqueness} of minimal geodesics, we find a sufficient condition  described  in terms of $b$: there exists a unique minimal geodesics if $b-1$ has trivial anhihilator ({\bf Theorem \ref{unicidad}}).

Note the following elementary relation, put $a=p-q$: $a^2+b^2=2b$, so that $(b-1)^2=(1-a)(1+a)$.
Thus, if $b-1$ has trivial anhihilator, then $(b-1)^2$ also has this property, and therefore both $a\pm 1$ have this property, which shows that the uniqueness sufficient condition is stronger than the existence condition. Also note that if $\|p-q\|<1$, then $a\pm 1$ and thus also $b-1$ are invertible, so that the conditions of the above theorems are weaker than the condition $\|p-q\|<1$.

Section 2 contains preliminary facts. In Section 3  the  main results are stated. 

\section{Preliminaries}

The space $\p_\a$ is sometimes called the Grassmann manifold of $\a$. In the case when $\a=\b(\h)$, $\p_{\b(\h)}$ parametrizes the set of closed subspaces of $\h$: to each closed subspace $\s\subset\h$ corresponds the orthogonal projection $P_\s$ onto $\s$. Let us describe below the main features of the geometry of $\p_\a$ in the general case.

\subsection{Homogeneous structure}

Denote by $\u_\a=\{u\in\a: u^*u=uu^*=1\}$
 the unitary group of $\a$. It is a Banach-Lie group, whose Banach-Lie algebra is
$\a_{as}=\{x\in\a: x^*=-x\}$. This group acts on $\p_\a$ by means of $u\cdot p=upu^*, \ u\in\u_\a, \ p\in\p_\a$.
The action is smooth and locally transitive. It is known (see \cite{pr}, \cite{cpr}) that $\p_\a$ is what in differential geometry is called a {homogeneous space} of the group $\u_\a$. The local structure of $\p_\a$ is described using this action. For instance, the tangent space $(T\p_\a)_p$ of $\p_\a$ at $p$ is given by  $(T\p_\a)_p=\{x\in\a_s: x=px+xp\}$.

The isotropy subgroup of the action at $p$, i.e., the elements of $\u_\a$ which fix a given $p$, is $\ii_p=\{v\in\u_\a: vp=pv\}$.
The isotropy algebra $\mathfrak{I}_p$ at $p$ is its Banach-Lie algebra
$\mathfrak{I}_p=\{y\in\a_{as}: yp=py\}$.
 
\subsection{Reductive structure}

Given an homogeneous space, a {\it reductive } structure is a smooth distribution $p\mapsto  {\bf H}_p\subset\a_{as}$, $p\in\p_\a$, of supplements of $\mathfrak{I}_p$ in $\a_{as}$, which is invariant under the action of $\ii_p$. That is, a distribution ${\bf H}_p$ of closed linear subspaces of $\a_{as}$ verifying that ${\bf H}_p\oplus \mathfrak{I}_p=\a_{as}$; $v{\bf H}_pv^*={\bf H}_p$ for all $v\in\ii_p$;  and the map $p\mapsto {\bf H}_p$ is smooth.

In the case  of $\p_\a$, the choice of the (so called) {\it horizontal}  subspaces ${\bf H}_p$ is fairly natural, if one represents elements of $\a$ as matrices. Having fixed a base point $p\in\p_\a$, any element $a\in\a$ can be represented as a $2\times 2$ matrix.  Then, it is clear that
$$
\ii_p=\{\left(\begin{array}{cc} v_1 & 0 \\ 0 & v_2 \end{array} \right) : v_1^*v_1=v_1v_1^*=p, v_2^*v_2=v_2v_2^*=p^\perp\} \ , \ \  \mathfrak{I}_p
=\{\left(\begin{array}{cc} y_1 & 0 \\ 0 & y_2 \end{array} \right): y_i^*=-y_i\}.
$$
Thus, the natural choice for ${\bf H}_p$  defined in \cite{cpr} is  
${\bf H}_p=\{\left(\begin{array}{cc} 0 & z \\ -z^* & 0 \end{array} \right): z\in p\a p^\perp\}$.

It is worth noting that tangent vectors at $p$ have  {\it selfadjoint}  codiagonal matrices. 

As in classical differential geometry, a reductive structure on a homogeneous space defines a linear connection, and  all the invariants of the linear connection (covariant derivative, torsion and curvature tensors, etc) can be computed in terms of horizontal elements \cite{cpr}, \cite{pr}. We shall focus here on {\it geodesics}. Given a base point $p\in\p_\a$, and a tangent vector ${\bf x}=\left(\begin{array}{cc} 0 & x \\ x^*  & 0 \end{array} \right)\in (T\p_\a)_p$, the unique geodesic $\delta$ of $\p_\a$ with $\delta(0)=p$ and $\dot{\delta}(0)={\bf x}$ is given by
$$
\delta(t)= e^{tz_\xx}pe^{-tz_\xx},
$$
where $z_{\xx}:=\left(\begin{array}{cc} 0 & -x \\ x^*  & 0 \end{array} \right) $.

\subsection{Finsler metric}

As we mentioned above, one endows each tangent space $(T\p_\a)_p$ with the usual norm of $\a$. We emphasize that this (constant) distribution of norms is not a Riemannian metric (the C$^*$-norm is not given by an inner product), neither is it a Finsler metric in the classical sense (the map $a\mapsto \|a\|$ is non differentiable). Therefore the minimality result which we describe below does not follow from general considerations. It was proved in \cite{pr} using ad-hoc techniques. 
\begin{enumerate}
\item
Given $p\in\p_\a$ and $\xx\in(T\p_\a)_p$, normalized so that $\|\xx\|\le\pi/2$, then the geodesic $\delta$ remains minimal for all $t$ such that $|t|\le 1$.
\item
Given $p,q\in\p_\a$ such that $\|p-q\|<1$, there exists a unique minimal geodesic $\delta$ such that $\delta(0)=p$ and $\delta(1)=q$.
\end{enumerate}
We shall call these geodesics (with initial speed $\|\xx\|\le \pi/2$) {\it normalized} geodesics.
\begin{rem}
One does not have to deal with homogeneous reductive spaces in order to understand (at least partially) these results. Using the C$^*$-norm at every tangent space, means that a curve $\delta$ as above (never mind calling it a geodesic) has the following property: for every piecewise differentiable curve $\gamma(t)\in\p_\a$ whose endpoints are $\gamma(t_0)=p$ and $\gamma(t_1)=q$, it holds that
$$
\displaystyle{\int_{t_0}^{t_1}\|\dot{\gamma}(t)\| d t \ge \int_{0}^{1}\|\dot{\delta}(t)\| d t=\|\xx\|}.
$$
\end{rem}

\subsection{Projections in $\b(\h)$}
Let us finish this preliminary section by recalling the case $\a=\b(\h)$. Given $P,Q\in\p_{\b(\h)}$ (operators in Hilbert space will be denoted with upper case letters, this inconsistency will prove useful later, when we deal with the Calkin algebra), in \cite{jmaa} it was proved that there exists a geodesic of the linear connection just described, which joins $P$ and $Q$, if and only if $\dim R(P)\cap N(Q)=\dim N(P)\cap R(Q)$.

The geodesic  can be chosen minimal. There is a {\it unique} minimal geodesic joining these points if and only if these dimensions equal zero.

Note the fact that the condition $\|P-Q\|<1$ implies that these dimensions are zero, but the converse is not true.

\section{Projections in the Calkin algebra}

From now on, we consider  the case of $\a=\c(\h)$.  Tipically, upper case letters $P, A, X$ will denote elements of $\b(\h)$ and their lower case counterparts $p=\pi(P), a=\pi(A), x=\pi(X)$ the corresponding elements of $\c(\h)$.

Let us point out the following elementary facts:
\begin{lem}
Let $p\in\p_{\c(\h)}$, $p\ne 0,1$. Then there exists $P\in\p_{\b(\h)}$ such that $\pi(P)=p$
\end{lem}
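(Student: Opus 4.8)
The plan is to produce a self-adjoint lift of $p$ and then replace it by a nearby spectral projection. First I would pick any $A_0\in\b(\h)$ with $\pi(A_0)=p$ and symmetrize, setting $A=\frac{1}{2}(A_0+A_0^*)$; since $p^*=p$ this is still a lift, $\pi(A)=p$, and now $A=A^*$. Because $\pi$ is a $*$-homomorphism onto $\c(\h)$, the spectrum of $A$ in $\c(\h)$ --- i.e.\ the essential spectrum $\sigma_{ess}(A)$ --- equals $\sigma(p)$, which is contained in $\{0,1\}$ since $p$ is a self-adjoint idempotent.

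Next I would use the structure of the spectrum of a self-adjoint operator outside its essential spectrum. Since $\sigma_{ess}(A)\subseteq\{0,1\}$, the part of $\sigma(A)$ lying in any interval $[\epsilon,1-\epsilon]$ with $0<\epsilon<1/2$ consists of finitely many eigenvalues of finite multiplicity; in particular $(0,1)\setminus\sigma(A)\ne\emptyset$, so I can fix a point $c\in(0,1)$ with $c\notin\sigma(A)$. The characteristic function $f=\chi_{[c,\infty)}$ is then continuous on $\sigma(A)$ (its only discontinuity, at $c$, is avoided), so $P:=f(A)$ is defined by the continuous functional calculus. As $f$ is real-valued and $f^2=f$ on $\sigma(A)$, $P$ is a genuine self-adjoint projection in $\b(\h)$.

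Finally I would compute $\pi(P)$. Continuous functional calculus commutes with the $*$-homomorphism $\pi$, so $\pi(P)=\pi(f(A))=f(\pi(A))=f(p)$. Since $\sigma(p)\subseteq\{0,1\}$ and $f(0)=0$, $f(1)=1$ (because $0<c<1$), the function $f$ agrees with the identity on $\sigma(p)$, whence $f(p)=p$. Therefore $\pi(P)=p$, as required. (The hypothesis $p\ne0,1$ is not actually needed for the existence of a lift --- those cases lift trivially to $0$ and $1$ --- but it guarantees $P\ne 0,1$.)

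The only genuinely delicate step is the choice of $c$: one must be able to place the jump point of the characteristic function in a gap of $\sigma(A)$, rather than fixing $c=1/2$ in advance (which might be an isolated eigenvalue). This is exactly what the inclusion $\sigma_{ess}(A)\subseteq\{0,1\}$ buys us, via the discreteness of $\sigma(A)\setminus\sigma_{ess}(A)$; once $c$ avoids $\sigma(A)$ the rest is a routine application of functional calculus and its naturality under $\pi$.
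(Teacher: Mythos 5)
Your proof is correct and follows essentially the same route as the paper: symmetrize a lift, observe that the spectrum of the self-adjoint lift can only accumulate at $0$ and $1$, and apply continuous functional calculus with a function that is $0$ near $0$ and $1$ near $1$, using naturality under $\pi$. The only cosmetic difference is that you use a characteristic function $\chi_{[c,\infty)}$ made continuous on $\sigma(A)$ by placing $c$ in a spectral gap, whereas the paper uses a globally continuous bump function equal to $1$ near $1$ and $0$ on the rest of the spectrum.
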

\begin{proof}
Let $T\in\b(\h)$ such that $\pi(T)=p$. Since $p^*=p$, $\pi(\frac12(T+T^*))=p$, i.e., we can suppose that $T^*=T$. Clearly $T^2-T\in\k(\h)$, and thus the spectrum of $T$ accumulates only (eventualy) at $0$ and $1$.  Let $\varphi:\mathbb{R}\to\mathbb{R}$ be a continuous function which is equal to $1$ on an interval $I$ containing $1$ in its interior,  $0\notin I$, such that $\varphi$ is  zero in $\sigma(T)\cap (\mathbb{R}\setminus I)$ (note that the spectrum $\sigma(T)$ is countable). Then $P=\varphi(T)$ is a selfadjoint projection, and 
$$
\pi(P)=\varphi(\pi(T))=\varphi(p)=p,
$$
\end{proof}
\begin{rem}
Note that any pair of proper projections  $p,q$  ($\ne 0,1$) in $\c(\h)$ are unitarilly equivalent. Indeed, let $P, Q\in\p_{\b(\h)}$ such that $\pi(P)=p$ and $\pi(Q)=q$. Thus, $P, P^\perp, Q$ and $Q^\perp$ have infinite rank ($p,q\ne 0,1$). Then $P$ and $Q$ are unitarilly equivalent: there exists $U\in\u_{\b(\h)}$ such that $UPU^*=Q$. Then $u=\pi(U)$ is a unitary element in $\c(\h)$ such that $upu^*=q$.
\end{rem}

If $\Delta(t)=e^{tZ}Pe^{-tZ}$ is a geodesic in $\p_{\b(\h)}$ with $\|Z\|\le \pi/2$, then it is clear that $\delta(t)=\pi(\Delta(t))$ is a geodesic in $\p_{\c(\h)}$ with $\|z\|\le \pi/2$. Indeed,
$$
\delta(t)=e^{tz}pe^{-tz},
$$
where $\pi(P)=p$, and $z=\pi(Z)$ satifies $z^*=-z$ and 
$$
pzp=p^\perp zp^\perp=\pi(PZP)=\pi(P^\perp ZP^\perp)=0.
$$
Moreover, $\|z\|\le \|Z\|\le \pi/2$.
The next result shows that there is a converse for this statement: any geodesic $\delta$ of $\p_{\c(\h)}$ with initial speed $\|z\|\le \pi/2$ lifts to a geodesic $\Delta$ with initial speed $\|Z\|\le\pi/2$. It is based on the following elementary observation,  which is an excercise and certainly  well known. We include a proof.
\begin{lem}
Let $x=x^*\in\c(\h)$. Then there exists $X=X^*\in\b(\h)$ such that $\pi(X)=x$ and $\|X\|=\|x\|$.
\end{lem}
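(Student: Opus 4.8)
The plan is to start from an arbitrary self-adjoint lift and then truncate its spectrum so that the norm drops to $\|x\|$ without disturbing the image in $\c(\h)$. First I would pick any $Y\in\b(\h)$ with $\pi(Y)=x$; replacing $Y$ by $\frac12(Y+Y^*)$ and using that $\pi$ is a $*$-homomorphism and $x^*=x$, I obtain a self-adjoint $X_0\in\b(\h)$ with $\pi(X_0)=x$. Already $\|X_0\|\ge\|\pi(X_0)\|=\|x\|$, so the only issue is that $X_0$ may be \emph{too large}; the task is to shrink it while keeping its class fixed.

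Next, set $m=\|x\|$. Since $x$ is selfadjoint in the (unital) C$^*$-algebra $\c(\h)$, its spectrum is real and $\sigma(x)\subseteq[-m,m]$. Introduce the continuous truncation $f:\mathbb{R}\to\mathbb{R}$ given by $f(t)=t$ for $|t|\le m$ and $f(t)=m\,\mathrm{sign}(t)$ for $|t|>m$, and define $X=f(X_0)$ through the continuous functional calculus in $\b(\h)$. Then $X$ is selfadjoint, and since $|f(t)|\le m$ for every real $t$ while $\sigma(X_0)\subseteq\mathbb{R}$, one has $\|X\|=\sup_{\lambda\in\sigma(X_0)}|f(\lambda)|\le m$.

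Finally I would check that truncation does not move the image. Because $\pi$ is a unital $*$-homomorphism, it intertwines the continuous functional calculus, so $\pi(X)=\pi(f(X_0))=f(\pi(X_0))=f(x)$; and since $\sigma(x)\subseteq[-m,m]$, where $f$ is the identity, $f(x)=x$. Hence $\pi(X)=x$, and combining $\|X\|\le m$ with $\|X\|\ge\|\pi(X)\|=\|x\|=m$ gives $\|X\|=\|x\|$, as required. The only step that deserves care is the identity $\pi(f(X_0))=f(\pi(X_0))$, but this is the standard compatibility of a $*$-homomorphism with functional calculus (it holds for polynomials and extends by norm-continuity, as $\pi$ is contractive), so I expect no genuine obstacle.
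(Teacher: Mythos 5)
Your proof is correct, but it takes a genuinely different and in fact more standard route than the paper. You lift $x$ to a self-adjoint $X_0$, note that $\sigma(x)\subseteq[-m,m]$ with $m=\|x\|$ because norm equals spectral radius for self-adjoint elements, and then apply the truncation $f(t)=\max(-m,\min(t,m))$ via continuous functional calculus; the identity $\pi(f(X_0))=f(\pi(X_0))$ is indeed the standard compatibility of a unital $*$-homomorphism with functional calculus (true for polynomials, extended by uniform approximation on a compact set containing $\sigma(X_0)$, using that $\pi$ is contractive and $\sigma(\pi(X_0))\subseteq\sigma(X_0)$). This yields $\|X\|\le m$ together with $\pi(X)=f(x)=x$, and the reverse inequality is automatic. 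The paper instead invokes the Weyl--von Neumann theorem to reduce to a diagonalizable $X_0$ and then solves an explicit minimization over compact diagonal perturbations, identifying the optimal norm as $\limsup|\dd|$. Your argument is shorter, avoids any structure specific to $\b(\h)/\k(\h)$ (it proves the general fact that self-adjoint elements of any quotient C$^*$-algebra lift isometrically to self-adjoint elements), whereas the paper's computation has the side benefit of exhibiting the essential norm concretely as the $\limsup$ of the diagonal entries. Either proof suffices for the use made of the lemma in Proposition \ref{levanta geodesicas}.
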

\begin{proof}
Clearly, there exists $X_0=X_0^*\in\b(\h)$ such that $\pi(X_0)=x$. Recall the Weyl-von Neumann theorem (see for instance \cite{libro davidson}), which states that there exists a diagonalizable selfadjoint operator $X_d$ and a compact operator $K$ such that $X_0=X_d+K$. Thus, we may suppose that $X_0$ is diagonalizable, and let us fix the orthonormal basis of $\h$ which diagonalizes $X_0$. Denote by $\dd=\{d_n\}$ the sequence of entries of $X_0$. It suffices to find a sequence $\kk\in c_0$ (the space of sequences which converge to zero) such that 
$$
\|\dd+\kk\|_\infty\le\ \|\dd+\kk'\|_\infty
$$
for any other sequence $\kk'\in c_0$. Indeed, denote by $\d:\b(\h)\to\b(\h)$ the linear positive contraction which assigns to $T$ the diagonal operator $\d(T)$ with the same diagonal entries as $T$. Then, if $K_0$ denotes the diagonal compact operator with $\kk$ in its diagonal, and $K'$ is any other compact operator (with diagonal $\kk'$)
$$
\|X_0+K_0\|=\|\dd+\kk\|_\infty\le\|\dd+\kk'\|_\infty=\|\d(X_0+K')\|\le \|X_0+K'\|.
$$
In order to find an optimal sequence $\kk\in c_0$ such that $\|\dd+\kk\|_\infty$ is as small as possible, note that 
$$
\inf\{\|\dd+\kk'\|_\infty: \kk'\in c_0\}=\limsup |\dd|.
$$
Indeed, on one hand, there can only be finitely many $d_n$ such that $d_n<\limsup \dd$, so that $\inf\{\|\dd+\kk'\|_\infty: \kk'\in c_0\}\le\limsup |\dd|$. On the other hand, 
for any $\kk'=\{k'_n\}\in c_0$,
$$
\limsup |\dd|=\limsup |\dd+\kk'|\le \sup_n |d_n+k'_n|=\|\dd+\kk'\|_\infty.
$$
Next, note that there exists an optimal $\kk\in c_0$ such that $\|\dd+\kk\|_\infty=\limsup |\dd|$. Clearly it suffices to reason with sequences of non negative numbers. Define
$$
\dd_0=\left\{ \begin{array}{lll} \limsup \dd & \hbox{ if } & d_n>\limsup \dd \\
d_n  & \hbox{ if } &  d_n\le \limsup \dd \end{array} \right.
$$
and 
$$
\kk_0=\left\{ \begin{array}{lll} d_n-\limsup \dd & \hbox{ if } & d_n>\limsup \dd \\
0  & \hbox{ if } &  d_n\le \limsup \dd \end{array} \right. .
$$
Clearly $\dd+\kk=\dd_0+\kk_0$ and $\|\dd_0\|_\infty=\limsup \dd$. Finally, $\kk_0\in c_0$: any subsequence of of $\kk_0$ has a converging sebsequence, which can only converge to zero.
\end{proof}
\begin{prop}\label{levanta geodesicas}
Let $\delta(t)=e^{tz}pe^{-tz}$ be a (normalized) geodesic of $\p_{\c(\h)}$: $z^*=-z$, $pzp=p^\perp zp^\perp=0$ and $\|z\|\le\pi/2$.
Then there exists a normalized geodesic $\Delta(t)=e^{tZ}Pe^{-tZ}$ which lifts $\delta$: $Z^*=-Z$, $PZP=P^\perp Z P^\perp=0$, $\|Z\|\le \pi/2$, $\pi(P)=p$ and $\pi(Z)=z$
\end{prop}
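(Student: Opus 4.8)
The plan is to lift the base point $p$ and the direction $z$ separately, and then to replace the lift of $z$ by its codiagonal part with respect to the chosen lift of $P$, checking that this correction preserves both the anti-selfadjointness and the norm bound. If $p=0$ or $p=1$, the codiagonal conditions $pzp=p^\perp z p^\perp=0$ force $z=0$ and the statement is trivial, so assume $p\neq 0,1$. First I would lift $p$ to a projection $P\in\p_{\b(\h)}$ with $\pi(P)=p$, using the lifting lemma above which guarantees that every proper projection in $\c(\h)$ comes from a projection in $\b(\h)$.

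Next I would produce an anti-selfadjoint lift of $z$ of controlled norm. Since $z^*=-z$, the element $w=iz$ is selfadjoint with $\|w\|=\|z\|\le\pi/2$. By the norm-preserving lifting lemma above, there is a selfadjoint $W\in\b(\h)$ with $\pi(W)=w$ and $\|W\|=\|w\|$. Setting $Z_0=-iW$ gives an anti-selfadjoint operator with $\pi(Z_0)=z$ and $\|Z_0\|=\|W\|\le\pi/2$. At this stage $Z_0$ already lifts $z$ and has the correct norm, but it need not be codiagonal with respect to $P$.

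I would then replace $Z_0$ by its codiagonal part,
$$
Z=PZ_0P^\perp+P^\perp Z_0 P.
$$
A direct computation gives $Z^*=-Z$ and $PZP=P^\perp Z P^\perp=0$, so $Z$ is anti-selfadjoint and codiagonal with respect to $P$. Because $z$ is itself codiagonal, one has $z=pzp^\perp+p^\perp z p$, whence $\pi(Z)=pzp^\perp+p^\perp z p=z$, so $Z$ still lifts $z$. With these properties the curve $\Delta(t)=e^{tZ}Pe^{-tZ}$ is a geodesic of $\p_{\b(\h)}$, and since $\pi$ is a $*$-homomorphism with $\pi(P)=p$, $\pi(Z)=z$, it satisfies $\pi(\Delta(t))=e^{tz}pe^{-tz}=\delta(t)$, i.e. $\Delta$ lifts $\delta$.

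The only point that requires care — and the sole place where something could in principle fail — is the normalization $\|Z\|\le\pi/2$, since passing from $Z_0$ to its codiagonal part could a priori enlarge the norm. This does not happen: writing $Z$ in $2\times 2$ block form relative to $R(P)\oplus R(P^\perp)$, its off-diagonal blocks are $PZ_0P^\perp$ and $P^\perp Z_0 P$, each of norm at most $\|Z_0\|$ because compressions by projections are contractive; and for an operator with vanishing diagonal blocks the norm equals the maximum of the norms of its two off-diagonal blocks. Hence $\|Z\|\le\|Z_0\|\le\pi/2$, and $\Delta$ is a normalized geodesic, as required.
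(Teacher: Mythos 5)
Your proof is correct and follows essentially the same route as the paper's: lift $z$ to an anti-selfadjoint $Z_0$ of equal norm via the selfadjoint norm-preserving lifting lemma applied to $iz$, lift $p$ to $P$, take the $P$-codiagonal compression $Z=PZ_0P^\perp+P^\perp Z_0P$, and control $\|Z\|$ by the block-matrix norm identity. The only (harmless) additions are your explicit treatment of the degenerate cases $p=0,1$ and the slightly more detailed justification of the off-diagonal norm estimate.
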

\begin{proof}
Given $z^*=-z$, by the above lemma ($i z$ is selfadjoint), there exists $Z_0\in\b(\h)$ such that $Z_0^*=-Z_0$, $\pi(Z_0)=z$ and $\|Z_0\|=\|z\|$. Pick $P\in\p_{\b(\h)}$ such that $\pi(P)=p$. Consider 
$$
Z=PZ_0P^\perp +P^\perp Z_0 P.
$$
Clearly $\pi(Z)=pzp^\perp+p^\perp z p=z$, because $z$ is $p$-codiagonal. Also, $Z$ is $P$-codiagonal, and 
$$
Z^*=(PZ_0P^\perp)^*+(P^\perp Z_0 P)^*=-P^\perp Z_0P-PZ_0P^\perp=-Z.
$$
In matrix form, $Z=\left( \begin{array}{cc} 0 & PZ_0P^\perp \\ P^\perp Z_0 P & 0\end{array} \right)$, so that 
$$
\|Z\|=\|PZ_0P^\perp\|=\|P^\perp Z_0P\|\le \|Z_0\|.
$$
Since $\|Z_0\|=\|z\|\ge \|Z\|$, equality holds.
\end{proof}
\begin{rem}\label{punto inicial}
Note that when lifting a geodesic $\delta$ of $\p_{\c(\h)}$  to a geodesic $\Delta$ of $\p_{\b(\h)}$,  the initial point $\Delta(0)=P$ can be chosen to be any projection in the fiber of $\pi^{-1}(\delta(0))$ of $\delta(0)$.
\end{rem}
Our main result follows.

\begin{teo}\label{el teorema}
Let $p,q$ be proper projections in $\c(\h)$ (i.e., $p,q\ne 0,1$).  Then there exists a  geodesic of $\p_{\c(\h)}$ joining $p$ and $q$ if and only there exist liftings projections $P$ and $Q$ of $p$ and $q$ ($\pi(P)=p$, $\pi(Q)=q$) such that $N(P-Q-1)$ and $N(P-Q+1)$ are  both finite dimensional, or else, are both infinite dimensional.

In either case, the geodesic can be chosen minimal.
\end{teo}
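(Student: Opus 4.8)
The plan is to reduce everything to the criterion for $\p_{\b(\h)}$ recalled in Section~2, using the correspondence between geodesics upstairs and downstairs provided by Proposition~\ref{levanta geodesicas} and Remark~\ref{punto inicial}. First I would record the eigenspace identification
$$
N(P-Q-1)=R(P)\cap N(Q), \qquad N(P-Q+1)=N(P)\cap R(Q),
$$
valid for any $P,Q\in\p_{\b(\h)}$: if $(P-Q)x=x$ then $\|x\|^2=\|Px\|^2-\|Qx\|^2\le\|x\|^2$ forces $Px=x$ and $Qx=0$, the reverse inclusion is immediate, and the eigenvalue $-1$ case is symmetric. Thus the hypothesis of the theorem says exactly that $\dim\bigl(R(P)\cap N(Q)\bigr)$ and $\dim\bigl(N(P)\cap R(Q)\bigr)$ are both finite or both infinite.

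For the forward implication, suppose $\delta$ is a geodesic of $\p_{\c(\h)}$ joining $p$ and $q$. The construction in the proof of Proposition~\ref{levanta geodesicas} produces, for the antihermitian $z$, a codiagonal lift $Z$ with $\pi(Z)=z$ (the normalization $\|z\|\le\pi/2$ is not needed here), giving a geodesic $\Delta(t)=e^{tZ}Pe^{-tZ}$ of $\p_{\b(\h)}$ with $P:=\Delta(0)$ a lift of $p$ and $Q':=\Delta(1)=e^{Z}Pe^{-Z}$ a projection with $\pi(Q')=\delta(1)=q$, hence a lift of $q$. Since $\Delta$ joins $P$ and $Q'$ in $\p_{\b(\h)}$, the criterion of Section~2 gives $\dim\bigl(R(P)\cap N(Q')\bigr)=\dim\bigl(N(P)\cap R(Q')\bigr)$; by the identification above, with the lifts $P,Q'$ these two dimensions are equal, in particular both finite or both infinite.

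For the converse I start from lifts $P,Q$ with the two dimensions both finite or both infinite and produce (possibly different) lifts admitting a minimal geodesic of $\p_{\b(\h)}$, which I then project by $\pi$. If both dimensions are infinite they are equal (both are countably infinite), and the $\p_{\b(\h)}$ criterion already yields a minimal geodesic joining $P$ and $Q$. The substantive case is when both are finite. Writing $\{\xi_i\}_{i=1}^{m}$ for an orthonormal basis of $R(P)\cap N(Q)$, I set $P':=P-\sum_{i=1}^m\xi_i\otimes\xi_i$, the projection onto $R(P)\ominus\mathrm{span}\{\xi_i\}$; this is a finite rank, hence compact, perturbation, so $\pi(P')=p$. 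A short orthogonality computation, using $\xi_i\in R(P)\cap N(Q)$, shows $R(P')\cap N(Q)=0$ while $N(P')\cap R(Q)=N(P)\cap R(Q)$ is unchanged. Next I take an orthonormal basis $\{\eta_j\}_{j=1}^{n}$ of $N(P')\cap R(Q)\subseteq R(Q)$ and set $Q':=Q-\sum_{j=1}^n\eta_j\otimes\eta_j$, again a compact perturbation with $\pi(Q')=q$. The same computation gives $N(P')\cap R(Q')=0$, and a further check (using $\eta_j\in N(P')$ together with $\eta_j\perp N(Q)$) shows that removing the $\eta_j$ does not recreate the other intersection, so also $R(P')\cap N(Q')=0$. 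With both intersections trivial, the criterion for $\p_{\b(\h)}$ furnishes a minimal geodesic $\Delta$ joining $P'$ and $Q'$.

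In either case the resulting geodesic is minimal, hence of the form $\Delta(t)=e^{tZ}P'e^{-tZ}$ with $\|Z\|\le\pi/2$; then $\delta:=\pi\circ\Delta$ is a geodesic of $\p_{\c(\h)}$ joining $p$ and $q$, and since $\pi$ is contractive its generator satisfies $\|z\|\le\|Z\|\le\pi/2$, so $\delta$ is normalized and therefore minimal by the Porta--Recht minimality of normalized geodesics. I expect the main obstacle to be the finite case of the converse: one must arrange the perturbations so that annihilating $R(P)\cap N(Q)$ does not enlarge $N(P)\cap R(Q)$, and so that the subsequent annihilation of the latter does not resurrect the former. The key device is to push the offending vectors into $N(P')\cap N(Q)$ (rather than into the ``common'' subspace $R(P')\cap R(Q)$), which is what keeps both bookkeeping computations decoupled; carefully verifying these two orthogonality computations is the heart of the argument.
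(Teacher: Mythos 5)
Your proposal is correct and follows essentially the same route as the paper: identify $N(P-Q\mp 1)$ with $R(P)\cap N(Q)$ and $N(P)\cap R(Q)$, lift geodesics via Proposition~\ref{levanta geodesicas} to apply the $\p_{\b(\h)}$ criterion for the forward direction, and in the converse replace $P,Q$ by finite-rank (hence compact) perturbations $P',Q'$ with trivial intersections before projecting a minimal normalized geodesic back down by $\pi$. The only cosmetic difference is that the paper kills both intersections at once using the Halmos--Dixmier five-space decomposition, whereas you do it by two successive rank-one subtractions with explicit orthogonality checks; both computations are valid and yield the same $P',Q'$.
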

\begin{proof}
Suppose first that there exists a geodesic $\delta(t)=e^{tz}pe^{-tz}$ such that $\delta(1)=q$. Then $\delta$ lifts to $\Delta(t)=e^{tZ}Pe^{-tZ}$. Denote $Q=\Delta(1)$. The existence of $\Delta$ implies that (see\cite{jmaa})
$\dim R(P)\cap N(Q)=\dim N(P)\cap R(Q)$.  It is easy to see that
$$
R(P)\cap N(Q)=\{\xi\in \h: P\xi=\xi \hbox{ and } Q\xi=0\}=\{\xi\in\h: (P-Q)\xi=\xi\}=N(P-Q-1).
$$
Similarly, $N(P)\cap R(Q)=N(P-Q+1)$. 
Thus the necessary part is clear.

Conversely, let $P,Q$ be projections in $\b(\h)$ such that $\pi(P)=p$ and $\pi(Q)=q$.  If  $P-Q+1$ and $P-Q-1$  have finite dimensional nullspaces. Let us consider the following $5$-space decomposition of $\h$ which is customary when dealing with two subspaces (\cite{dixmier}, \cite{halmos}):
$$
\h=R(P)\cap R(Q) \ \oplus \ N(P)\cap N(Q) \ \oplus \ R(P)\cap N(Q) \ \oplus \ N(P)\cap R(Q) \ \oplus \ \h_0,
$$
where $\h_0$, the orthogonal complement of the sum of the first four, is usually called the {\it generic part} of $P$ and $Q$. It is known, that these subspaces reduce both $P$ and $Q$, and that the reduction of our problem to the generic part has a positive solution: on the generic part, there exists a unique (minimal) geodesic joining these projections (see \cite{jmaa}). Since  we are currently supposing that $R(P)\cap N(Q)$ and $N(P)\cap R(Q)$ are finite dimensional, we can replace $P$ and $Q$, which in this decomposition are
$$
P=1\oplus 0 \oplus 1 \oplus 0 \oplus P_0 \  \hbox{ and  } \ Q=1\oplus 0 \oplus 0 \oplus 1 \oplus Q_0
$$
with $P',Q'$ given by
$$
P'=1\oplus 0 \oplus 0 \oplus 0 \oplus P_0 \  \hbox{ and  } \ Q'=1\oplus 0 \oplus 0 \oplus 0 \oplus Q_0,
$$
which are indeed projections, and differ from $P,Q$ on a finite dimensional subspace. Thus $\pi(P')=p$ and $\pi(Q')=q$. Note that
$$
R(P')\cap N(Q')=N(P')\cap R(Q')=\{0\}.
$$
Therefore there exists a geodesic $\Delta(t)=e^{tZ}P'e^{-tZ}$ joining $P'$ and $Q'$ in $\p_{\b(\h)}$. Thus $\delta=\pi(\Delta)$ is a geodesic in $\p_{\c(\h)}$ joining $p$ and $q$.

If  $R(P)\cap N(Q)$ and $N(P)\cap R(Q)$ are infinite dimensional,   there exists a normalized geodesic of $\p(\h)$ joining $P$ and $Q$, and thus the claim follows similarly in this case.

In either case, the geodesics are given by exponents $z$ or $Z$ with norm less or equal than $\pi/2$, so that they are minimal.
\end{proof}

For the uniqueness of normalized geodesics, we have the following sufficient condition in terms of $b=p+q$:

\begin{teo}\label{unicidad}
If $b-1$ has trivial anhililator, then there exists a unique minimal geodesic between $p,q\in\p_{\c(\h)}$.
\end{teo}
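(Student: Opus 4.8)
The plan is to reduce the uniqueness question in $\c(\h)$ to the uniqueness criterion in $\b(\h)$ recalled at the end of Section 2: a minimal geodesic between two projections $P,Q\in\p_{\b(\h)}$ is unique if and only if $R(P)\cap N(Q)=N(P)\cap R(Q)=\{0\}$, equivalently $N(P-Q\mp 1)=\{0\}$. Using the relation $(b-1)^2=(1-a)(1+a)$ with $a=p-q$ noted in the introduction, the hypothesis that $b-1$ has trivial anhihilator forces both $a+1=p-q+1$ and $a-1=p-q-1$ to have trivial anhihilator, and hence (by the elementary computation in the introduction) for \emph{every} lifting pair $P,Q$ one has $\dim N(P-Q\pm 1)<\infty$. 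So the existence hypothesis of Theorem \ref{el teorema} is met in its finite-dimensional branch, and a normalized minimal geodesic exists.

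First I would establish existence and then attack uniqueness by contradiction. Suppose $\delta_1(t)=e^{tz_1}pe^{-tz_1}$ and $\delta_2(t)=e^{tz_2}pe^{-tz_2}$ are two distinct normalized minimal geodesics from $p$ to $q$, with $z_1,z_2$ the associated $p$-codiagonal antihermitian exponents of norm $\le\pi/2$. By Proposition \ref{levanta geodesicas} each lifts to a normalized geodesic $\Delta_i(t)=e^{tZ_i}Pe^{-tZ_i}$ in $\p_{\b(\h)}$, and by Remark \ref{punto inicial} I may arrange that both lifts start at the \emph{same} projection $P$ in the fiber over $p$. Set $Q_i=\Delta_i(1)$, so $\pi(Q_1)=\pi(Q_2)=q$, i.e.\ $Q_1-Q_2\in\k(\h)$. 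The heart of the argument is to show that $z_1=z_2$, which forces $\delta_1=\delta_2$.

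The key step is to pass to the generic-part normal form. After the $5$-space decomposition of $\h$ relative to $P$ and $Q_1$ (or $Q_2$), the nongeneric summands where a genuine choice could be made are precisely $R(P)\cap N(Q_i)=N(P-Q_i-1)$ and $N(P)\cap R(Q_i)=N(P-Q_i+1)$, which are finite dimensional. On the generic part the minimal geodesic in $\b(\h)$ is unique, so any freedom in $z_1,z_2$ is confined to these finite-dimensional kernels. I would then argue that, because $b-1$ has trivial anhihilator in $\c(\h)$, the projections onto these finite-dimensional kernels map to $0$ under $\pi$, so that the two lifted geodesics $\Delta_1,\Delta_2$ differ only by a compact perturbation supported on a finite-dimensional space; projecting back via $\pi$ annihilates this difference and yields $z_1=z_2$ in $\c(\h)$, hence $\delta_1=\delta_2$.

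I expect the main obstacle to be the bookkeeping that converts ``the freedom lives in the finite-dimensional kernels $N(P-Q_i\pm1)$'' into the clean conclusion ``$\pi$ kills the difference.'' The subtlety is that the two lifts $\Delta_1,\Delta_2$ a priori land at \emph{different} operators $Q_1\ne Q_2$, so one cannot directly invoke the single-fiber $\b(\h)$ uniqueness; one must instead show that the part of each geodesic's angle operator living off the common generic part is itself compact and therefore invisible in the quotient, using the triviality of the anhihilator of $b-1$ to rule out any infinite-dimensional discrepancy. Carefully phrasing this—ideally by computing $b=p+q$ directly in terms of the exponent $z$ and showing that $\|z\|<\pi/2$ on the generic part while the value of $z$ on the finite-dimensional kernels is forced by the endpoint condition modulo compacts—is where the real work lies.
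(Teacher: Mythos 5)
Your setup is fine: the reduction of the hypothesis to $\dim N(P-Q\pm 1)<\infty$, the lifting of both geodesics via Proposition \ref{levanta geodesicas}, and the identification of the real difficulty (the two lifts end at \emph{different} projections $Q_1\ne Q_2$, so the $\b(\h)$ uniqueness theorem cannot be invoked for a single pair) all match the paper. But the step you defer as ``bookkeeping'' is precisely the missing argument, and the way you propose to close it does not work. It is not true, or at least not justified, that the discrepancy between $Z_1$ and $Z_2$ is ``a compact perturbation supported on a finite-dimensional space'': the two pairs $(P,Q_1)$ and $(P,Q_2)$ have \emph{different} five-space decompositions and different generic parts, and the uniqueness of the exponent on each generic part separately says nothing about how the two generic-part exponents compare. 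That $Z_1-Z_2$ is even compact is exactly what has to be proved. Moreover, the observation that the finite-dimensional kernel projections die under $\pi$ is automatic (finite rank implies compact) and uses nothing about $b-1$; so in your outline the trivial-annihilator hypothesis never actually does the work it is needed for.

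The idea your proposal is missing is the explicit formula from \cite{jmaa}: $e^{Z}=V(2P-1)$, where $V$ is the unitary part of the polar decomposition of $B-1=P+Q-1$, and $Z$ is the unique antihermitian logarithm with spectrum in $[-\pi/2,\pi/2]$. This converts the comparison of exponents into a comparison of the unitary parts $V,V'$ for the two lifted pairs. Since $\pi$ is a $*$-homomorphism, $\pi(|B-1|)=|b-1|$ for both lifts, so $\pi(V)\,|b-1| = b-1 = \pi(V')\,|b-1|$, hence $\bigl(\pi(V)-\pi(V')\bigr)(b-1)=0$. This is the one place where the hypothesis enters decisively: the trivial annihilator of $b-1$ lets you cancel it and conclude $\pi(V)=\pi(V')$, whence $e^{z}=e^{z'}$ and, by uniqueness of the normalized logarithm, $z=z'$. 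Without this (or an equivalent device making the polar decomposition descend to the Calkin algebra), your argument does not close.
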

\begin{proof}

Suppose that $b-1$ has trivial anhihilator. Then, as remarked in the Introduction, 
$\dim N(P-Q\pm 1)<\infty$.  Let $\delta$ and $\delta'$ be two normalized minimal geodesics joining $p$ and $q$: $\delta(t)=e^{tz}pe^{-tz}$, $\delta'(t)=e^{tz'}pe^{-tz'}$, $\|z\|=\|z'\|\le \pi/2$, with $z,z'$ $p$-codiagonal, and $\delta(1)=\delta'(1)=q$. Then, as in the proof of Theorem \ref{el teorema}, there exist projections $P$ and $Q$, relative to the geodesic $\delta$, such that $R(P)\cap N(Q)=N(P)\cap R(Q)=\{0\}$, and $Z^*=-Z$ $P$-codiagonal with $\|Z\|=\|z\|$ such that 
$\Delta(t)=e^{tZ}Pe^{-tZ}$ lifts $\delta$. Denote $Q=\Delta(1)$.  Similarly, let $P',Q',Z'$ the data corresponding to $\delta'$, with $R(P')\cap N(Q')=N(P')\cap R(Q')=\{0\}$. Note that $P-P'$ and $Q-Q'$ are compact. The exponents $Z$ and $Z'$ are uniquely determined by $P,Q$ and $P',Q'$ respectively. Let us recall from \cite{jmaa} how they are constructed. The operator $B-1=P+Q-1$ is selfadjoint and has trivial nullspace. Indeed, since $B-1=P-Q^\perp$ is a difference of projections, its nullspace is trivial
$$
N(B-1)=R(P)\cap R(Q^\perp)\oplus N(P)\cap N(Q^\perp)=R(P)\cap N(Q)\oplus N(Q)\cap R(P)=\{0\}.  
$$
Then, if $B-1=V|B-1|$ is the polar decomposition, $|B-1|$ has trivial nullspace and $V$ is a symmetry ($V^*=V^{-1}=V$). Then, it was shown in \cite{jmaa} that
\begin{equation}\label{la posta}
e^{Z}=V(2P-1).
\end{equation}
In fact, this formula was shown for projections in generic position. In the case at hand, we have also to take the subspace $R(P)\cap R(Q) \ \oplus \ N(P)\cap N(Q)$ into account. On this subspace, $P$ and $Q$ coincide, so that $P+Q-1$ equals $2P-1$, thus $|P+Q-1|$ is the identity and therefore $V$ coincides with $2P-1$. On the other hand, since $P$ and $Q$ coincide here, the exponent $Z$ of the minimal geodesic is $0$. Thus, on $R(P)\cap R(Q) \oplus N(P)\cap N(Q)$, equation (\ref{la posta}) is $e^0=(2P-1)(2P-1)=1$, which is trivial.

Equation (\ref{la posta}) determines $Z$, as the unique  anti-Hermitian logarithm of $V(2P-1)$ with spectrum in the interval $[-\pi/2,\pi/2]$. Analogously, we have that
$e^{iZ'}=V'(2P'-1)$, where $V'$ is the unitary part in the polar decompostion of $B'-1=P'+Q'-1$. Thus, in order to show that $z=z'$, it suffices to show that $e^{z}=e^{z'}$, i.e., that $\pi(V')(2p-1)=\pi(V')(2p-1)$. Since $2p-1$ is  invertible, we have to show that $\pi(V)=\pi(V')$. 
Note that $b=\pi(B)=\pi(B')$, and since $\pi$ is a $*$-homomorphism, $\pi(|B-1|)=|\pi(B-1)|=|b-1|$. Thus $b-1=\pi(V)|b-1|=\pi(V')|b-1|$, or equivalently, $\pi(V)(b-1)=\pi(V')(b-1)$. Then $(\pi(V)-\pi(V'))(b-1)=0$, and since $b-1$ has trivial anhihilator, $\pi(V)=\pi(V')$.
\end{proof}

\begin{prop}
Let $p,q\in\p_{\c(\h)}$. Suppose that there exist lifting projections $P$ and $Q$ such that $\dim N(P-Q\pm 1)=+\infty$. Then there exist infinitely many minimal geodesics joining $p$ and $q$ in $\p_{\c(\h)}$.
\end{prop}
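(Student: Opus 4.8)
The plan is to use the freedom in the rotation on the ``extra'' summand of the five-space decomposition already employed in the proof of Theorem~\ref{el teorema}. Fix lifts $P,Q$ with $R(P)\cap N(Q)=N(P-Q-1)$ and $N(P)\cap R(Q)=N(P-Q+1)$ both infinite dimensional, and split
$$
\h=\big(R(P)\cap R(Q)\big)\oplus\big(N(P)\cap N(Q)\big)\oplus\big(R(P)\cap N(Q)\big)\oplus\big(N(P)\cap R(Q)\big)\oplus\h_0 .
$$
All five summands reduce $P$ and $Q$. On the first two, $P$ and $Q$ coincide and force the trivial exponent $0$; on the generic part $\h_0$ there is a unique minimal geodesic with an anti-Hermitian, $P$-codiagonal exponent $Z_0$, $\|Z_0\|\le\pi/2$. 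These pieces are common to every curve I will build, so all the variation must come from the reducing subspace $\m:=\big(R(P)\cap N(Q)\big)\oplus\big(N(P)\cap R(Q)\big)$, where $P|_\m$ is the projection onto the first summand and $Q|_\m=(P|_\m)^\perp$ the projection onto the second.

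For each unitary $W:R(P)\cap N(Q)\to N(P)\cap R(Q)$ --- these exist because both subspaces are infinite dimensional --- I set, in block form on $\m$,
$$
Z_W=\frac{\pi}{2}\left(\begin{array}{cc} 0 & -W^* \\ W & 0 \end{array}\right),
$$
and define $Z=0\oplus 0\oplus Z_W\oplus Z_0$ across the five summands. Since $Z_W^2=-\frac{\pi^2}{4}\,\mathrm{id}_\m$, the exponential $e^{Z_W}=\left(\begin{array}{cc} 0 & -W^* \\ W & 0 \end{array}\right)$ is the off-diagonal unitary with corner $W$, and a one-line computation gives $e^{Z_W}(P|_\m)e^{-Z_W}=Q|_\m$. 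Hence $\Delta_W(t)=e^{tZ}Pe^{-tZ}$ is a geodesic in $\p_{\b(\h)}$ with $\Delta_W(0)=P$, $\Delta_W(1)=Q$, $Z$ anti-Hermitian and $P$-codiagonal, and $\|Z\|=\max\{\pi/2,\|Z_0\|\}=\pi/2$. Projecting down, $\delta_W:=\pi(\Delta_W)$ is a geodesic of $\p_{\c(\h)}$ joining $p$ and $q$ whose exponent $z_W=\pi(Z_W\oplus Z_0)$ has norm $\le\pi/2$, hence is minimal by the normalized-minimality statement of Section~2.

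It then remains to produce infinitely many of these $W$ giving pairwise distinct downstairs geodesics. I would fix one unitary $W_0$ and take $W_\lambda=\lambda W_0$ for $\lambda$ in the unit circle. Then $Z_{W_\lambda}-Z_{W_\mu}$ is the off-diagonal operator whose lower corner is $\frac{\pi}{2}(\lambda-\mu)W_0$, which is non-compact for $\lambda\neq\mu$ since $W_0$, being a unitary out of an infinite dimensional space, is non-compact; therefore $z_{W_\lambda}\neq z_{W_\mu}$ in $\c(\h)$. Finally the assignment $z\mapsto\dot\delta(0)=[z,p]=p^\perp z p-pz p^\perp$ is injective on $p$-codiagonal anti-Hermitian $z$, so distinct $z_{W_\lambda}$ yield distinct initial velocities and hence distinct geodesics. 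This produces uncountably many minimal geodesics between $p$ and $q$.

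The main obstacle is this last passage through $\pi$: distinct rotations $W$ always separate the geodesics upstairs in $\p_{\b(\h)}$, but one must check that the exponents do not collapse in the quotient. This is precisely where the infinite dimensionality of $R(P)\cap N(Q)$ and $N(P)\cap R(Q)$ is indispensable, since it is exactly what guarantees a non-compact one-parameter family of admissible rotations.
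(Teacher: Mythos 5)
Your proposal is correct and follows essentially the same route as the paper: both arguments isolate the reducing subspace $\big(R(P)\cap N(Q)\big)\oplus\big(N(P)\cap R(Q)\big)$, parametrize exponents there by unitaries between these two infinite dimensional subspaces, and separate the resulting geodesics in the quotient by exhibiting a family of such unitaries whose pairwise differences are non-compact (you use $\lambda W_0$, $|\lambda|=1$; the paper uses $UV$ with $U-1$ non-compact). The verification that non-compactness of the difference of exponents forces distinct initial velocities downstairs is the same in both.
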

\begin{proof}
Let $P,Q\in\p_{\b(\h)}$ such that $\pi(P)=p$ and $\pi(Q)=q$. Then $R(P)\cap N(Q)$ and $N(P)\cap R(Q)$ are infinite dimensional. Then there exist isometries 
$$
V, V': N(P)\cap R(Q) \to R(P)\cap N(Q)
$$
such that $V-V'$ is not compact. For instance, let $V$ be  such an isometry, and pick a unitary $U$ acting in $R(P)\cap N(Q)$ such that $U-1$ is not compact, and put $V'=UV$. There are infinitely many unitaries $U$ with this property, such that $\pi(U)$ are different: the unitary group of $\c(R(P)\cap N(Q))$ is infinite. Then (see for instance \cite{jmaa}), one constructs geodesics between $P$ and $Q$ determining its velocity vectors as follows:
$$
Z=0\oplus 0 \oplus \{i\pi/2(V+V^*)\} \oplus Z_0,
$$
in the (four space) decomposition 
$$
\h=R(P)\cap R(Q) \  \oplus N(P)\cap N(Q) \ \oplus \{R(P)\cap N(Q) \ \oplus \  N(P)\cap  R(Q)\} \ \oplus \h_0,
$$
and $Z'$ defined analogously, with $V'$. The part $Z_0$ acting in $\h_0$ ($\|Z_0\|\le \pi/2$) is uniquely determined. Then clearly $Z-Z'$ is non compact. It follows that if $z=\pi(Z)$ and $z'=\pi(Z')$, then $\delta(t)=e^{tz}pe^{-tz}$ and $\delta'(t)=e^{tz'}pe^{-tz'}$ are two geodesics joining $p$ and $q$, with
$$
\dot{\delta}(0)-\dot{\delta'}(0)=z-z'\ne 0,
$$
i.e., $\delta\ne\delta'$.
\end{proof}

\end{document}